\documentclass[letterpaper, 10 pt, conference]{eccconf}  % Comment this line out if you need a4paper

\IEEEoverridecommandlockouts                              % This command is only needed if 
\usepackage{amsmath} % assumes amsmath package installed
\usepackage{amssymb}  % assumes amsmath package installed 

\usepackage{cleveref}
\usepackage{todonotes}
\usepackage{textgreek}
\usepackage{tikz}
\usepackage{pgfplots}
\pgfplotsset{compat=1.18}

\newlength{\figurewidth}
\newtheorem{theorem}{Theorem}
\newtheorem{definition}{Definition}
\newtheorem{proposition}{Proposition}

\newtheorem{assumption}{Assumption}
\newtheorem{remark}{Remark}

\crefname{figure}{fig.}{figs.}
\Crefname{figure}{Fig.}{Figs.}

\title{\LARGE \bf
Sum-of-Squares Stability Verification on Manifolds with Applications in Spacecraft Attitude Control
}

\author{Fabian Geyer, Friedrich Tuttas, Walter Fichter, and Torbjørn Cunis
\thanks{This is an extended version of a paper accepted at the European Control Conference (ECC) [2026]. This work was funded by the Deutsche Forschungsgemeinschaft project number 516238647 -- SFB1667/1 (ATLAS -- Advancing Technologies for Low-Altitude Satellites).}
\thanks{Institute of Flight Mechanics and Controls, University of Stuttgart, 70569 Stuttgart, Germany. Corresponding author: {\tt\small fabian.geyer@ifr.uni-stuttgart.de}}%
}

\newcommand{\R}{\mathbb{R}}
\newcommand{\setH}{\mathcal{H}}
\newcommand{\om}{\omega_\mathrm{BI}^{\mathrm{B}}}
\newcommand{\dotom}{\dot{\omega}_\mathrm{BI}^{\mathrm{B}}}
\newcommand{\w}{{s}_\mathrm{BI}^{\mathrm{B}}}
\newcommand{\wdot}{\dot{s}_\mathrm{BI}^{\mathrm{B}}}

\newcommand{\sos}[1]{\Sigma_{#1}[x]}
\newcommand{\realpoly}[1]{\mathbb{R}_{#1}[x]}

\newcommand{\skewop}[1]{\left[ {#1} \times \right]}

\newcommand{\omBO}{\omega_\mathrm{BO}^{\mathrm{B}}}
\newcommand{\omBOdot}{\dot{\omega}_\mathrm{BO}^{\mathrm{B}}}
\newcommand{\xBO}{\hat{x}_{\mathrm{O}}^{\mathrm{B}}}
\newcommand{\yBO}{\hat{y}_{\mathrm{O}}^{\mathrm{B}}}
\newcommand{\zBO}{\hat{z}_{\mathrm{O}}^{\mathrm{B}}}

\newcommand{\qBO}{q_\mathrm{BO}}
\newcommand{\qBOdot}{\dot{q}_\mathrm{BO}}

\newcommand{\cosd}{C_{\delta}}

\newcommand{\inner}[2]{\left< #1, #2 \right>}

\newcommand{\norm}[1]{\left\| #1 \right\|_2}

\begin{document}

\bstctlcite{IEEEtranBSTcontrol}

\maketitle
\thispagestyle{empty}
\pagestyle{empty}

%%%%%%%%%%%%%%%%%%%%%%%%%%%%%%%%%%%%%%%%%%%%%%%%%%%%%%%%%%%%%%%%%%%%%%%%%%%%%%%%
\begin{abstract}
In the context of spacecraft attitude control, parametrizations such as direction vectors or quaternions are often used to avoid singularities in the attitude representation. This, however, complicates the stability analysis of the system since, given the additional unit constraints, the resulting dynamics evolve on non-contractible manifolds. In this paper, we present a framework to verify almost global asymptotic stability of such systems using LaSalle's invariance principle and sum-of-squares programming, simplifying the search for Lyapunov functions. The framework is then applied to two examples: two-axis attitude acquisition utilizing aerodynamics in very low Earth orbits, and three-axis attitude acquisition for a satellite subject to gravity gradient torques in a circular orbit.
\end{abstract}

%%%%%%%%%%%%%%%%%%%%%%%%%%%%%%%%%%%%%%%%%%%%%%%%%%%%%%%%%%%%%%%%%%%%%%%%%%%%%%%%
\section{Introduction}
\label{sec:intro}
Stability certification for complex, autonomous nonlinear systems is of critical importance in many engineering applications, including aerospace systems.
For spacecraft, the ability to formally certify the stability and robustness of the attitude control system is paramount for mission success.
A central problem in this domain is the mathematical representation of attitude. Since common representations such as Euler angles or Modified Rodrigues Parameters suffer from singularities, constrained representations such as quaternions or direction vectors are often preferred \cite{wieQuaternionFeedbackRegulator1989, fichterPrinciplesSpacecraftControl2023}.
However, these advantageous representations introduce their own analytical challenges.
Due to the additional unit constraints, the resulting dynamics evolve on non-contractible manifolds (e.g., quaternions evolve on the unit 3-sphere) complicating system analysis. 
A typical approach to verify stability for such nonlinear systems is the analytical construction of suitable Lyapunov functions for the nonlinear system. This has been successfully demonstrated for certain classes of quaternion-based controllers, as exemplified by \cite{chaturvediAlmostGlobalAttitude2006}, which proved almost global asymptotic stability (AGAS) for spacecraft in circular orbits.
Unfortunately, these manual, pen-and-paper methods are difficult to apply and become practically impossible as system complexity increases.
Consider, for example, a satellite subject to aerodynamic torques in very low Earth orbits (VLEO) \cite{crispSOARSatelliteOrbital}. Clearly, verifying stability properties for such systems in the full nonlinear setting would be beneficial. However, due to the complex dynamics, stability analysis so far has been limited to local stability analysis using linearization \cite{mostazaMethodologyAnalyzeAttitude2016} or Monte-Carlo methods \cite{livadiottiUncertaintiesDesignActive2022}.

A promising method to overcome these limitations and provide systematic analysis tools is sum-of-squares (SOS) programming. SOS formulates the search for a polynomial Lyapunov or storage function as a convex semidefinite program (SDP), which can be solved efficiently \cite{papachristodoulouTutorialSumSquares2005,parriloSemidefiniteProgrammingRelaxations2003a}.
Given its success in various robust control and even synthesis tasks \cite{ebenbauerAnalysisDesignPolynomial2006,jarvis-wloszekControlsApplicationsSum2003}, the systematic application of SOS to constrained spacecraft attitude systems holds significant potential for practical application.
While SOS is powerful, a sophisticated framework for applying it directly to systems on non-contractible manifolds, particularly for global stability properties, is still missing. Initial steps have been taken, for example, to verify local stability of quaternion-based spacecraft attitude control systems using SOS \cite{misraSumofSquaresBasedComputation2020}. However, a general methodology is needed.

This paper presents a novel framework to close this gap, enabling the systematic stability analysis of constrained nonlinear attitude control systems using SOS programming. The main contributions of this paper are:
\begin{enumerate}
	\item The derivation of a general dissipativity condition to verify almost global asymptotic stability for systems evolving on non-contractible manifolds.
	\item A systematic method to relax this theoretical condition into a convex sum-of-squares program that can be solved efficiently using standard SDP solvers.
	\item The application of this framework to two relevant spacecraft attitude stabilization problems, including a complex VLEO case with aerodynamic torques, providing the first systematic SOS-based almost global analysis for such a system.
\end{enumerate}

The remainder of this paper is structured as follows. \Cref{sec:prel} introduces the formal problem statement and relevant definitions. In \Cref{sec:methodology}, we present our core theoretical contribution: the dissipativity condition for AGAS on manifolds. This condition is then relaxed to a tractable SOS program in \Cref{sec:sos}. Finally, \Cref{sec:examples} demonstrates the efficacy of our method through two relevant spacecraft attitude control case studies.

	\section{Preliminaries}
	\label{sec:prel}

	\subsubsection*{Notation}
	Let $\R$ be the set of real numbers, $\R^n$ the $n$-dimensional Euclidean vector space, and $\R^{m\times m}$ the (vector) space of $m \times m$ real matrices. Furthermore, $\mathbb{S}_m^{+}$ denotes the set of positive semidefinite matrices in $\mathbb{R}^{m \times m}$.
	 Let $\norm{\cdot}$ and $\inner{\cdot}{\cdot}$ denote the Euclidean norm and inner product, respectively.
	For a vector-valued function $\phi: \mathbb R^n \to \mathbb R^m$, the derivative of $\phi$ at $\bar x \in \mathbb R^n$ is the linear operator $\nabla \phi(\bar x): \mathbb R^n \to \mathbb R^m$ defined as
	
	\begin{equation*}
	\nabla \phi(\bar x): x \mapsto J_{\bar x} x, \quad [J_{\bar x}]_{ij} = \frac{\partial \phi_i}{\partial x_j}(\bar x),
	\end{equation*}
    where $J_{\bar x} \in \mathbb R^{m \times n}$ is the Jacobian matrix of $\phi$ at $\bar x$.

	\subsection{Problem Statement}
\label{sec:problem_statement}

Consider the autonomous nonlinear system
\begin{align}
	\dot{x} &= f(x), \quad x(0) \in \setH, \quad x(t) \in \setH \ \text{for all} \ t \geq 0, \label{eq:dyn_system}
\end{align}
where $x \in \mathbb R^n$ is the system state, $f: \mathbb R^n \to \mathbb R^n$ is a smooth vector field and 
\[
\setH := \left\{ x \in \R^n \ \middle|\ h(x) = 0 \right\}
\]
with $h: \mathbb R^n \to \mathbb R$ is a smooth and non-contractible manifold of dimension $\ell$. 
We assume that the system has a finite number of isolated equilibrium points $x^*_j \in \setH$ satisfying
\begin{align}
	f(x^*_j) = 0, \quad j = 1,\ldots,k, \label{eq:equilibria}
\end{align}
of which, without loss of generality, $x^*_1=0$ will be the point of interest.

	\begin{assumption}
		\label{ass:positive_real_parts}
	For $j = 2, \ldots, k$, it holds that
	\begin{equation}
        \label{eq:positive_real_parts}
		\exists v \in \operatorname{ker} \left(\nabla h(x_j^*) \right), \exists \lambda > 0 \text{ s.t. } \left[J_{x^*_j} - \lambda I \right] v = 0.
	\end{equation}
	%that the Jacobian $J_{x^*_j}$ has at least one eigenvalue with positive real part.
	\end{assumption}

Eq.~\eqref{eq:positive_real_parts} corresponds to an unstable eigendynamic (locally around $x_j^*$) of \eqref{eq:dyn_system} within the tangential subspace of $\mathcal H$. If $H \in \mathbb R^{n \times \ell}$ is a basis of the tangential space of $\mathcal H$ at $x_j^*$, then \eqref{eq:positive_real_parts} is equivalent to the matrix $H^\top J_{x_j^*} H$ having a positive eigenvalue.
\begin{remark}
	While Assumption \ref{ass:positive_real_parts} excludes unstable equilibria without positive eigenvalues, it is rarely limiting in practical mechanical applications such as spacecraft attitude control. Therefore, we will not relax this condition within the scope of this work.
\end{remark}

	\subsection{Stability Definitions}
	To formulate stability properties on the manifold $\setH$, we first recall some standard definitions.

	\begin{definition}
	A set $\Omega \subseteq \setH$ is  \emph{positively invariant} if and only if for all $x(0) \in \Omega$, the corresponding solution $x(t)$ of~\eqref{eq:dyn_system} satisfies $x(t) \in \Omega$ for all $t \geq 0$.
	\end{definition}

	We now define the local stability properties of an equilibrium point $x^*$ on $\setH$.

	\begin{definition}
	An equilibrium point $x^* \in \mathcal H$ of~\eqref{eq:dyn_system} is  \emph{Lyapunov stable} if and only if for all $\epsilon > 0$ there exists a $\delta > 0$ such that, for all $x(0) \in \setH$ with $\norm{x(0) - x^*} < \delta$, it holds that $\norm{x(t) - x^*} < \epsilon$ for all $t \geq 0$.  
	If, in addition, there exists a $\delta > 0$ such that, for all $x(0) \in \setH$ with $\norm{x(0) - x^*} < \delta$, it holds that $\lim_{t \to \infty} \| x(t) - x^* \|_2 = 0$, then $x^*$ is \emph{locally asymptotically stable}.
	\end{definition}

	To provide stronger stability guarantees, we further consider the following property.

	\begin{definition}
    An equilibrium point $x^* \in \mathcal{H}$ of~\eqref{eq:dyn_system} is \emph{almost globally asymptotically stable} if $x^*$ is Lyapunov stable and, for almost all\footnote{That is, the complementary set of initial conditions has measure zero in $\mathcal{H}$. For a formal definition of sets of measure zero on smooth manifolds, we refer the reader to \cite[Prop. 6.5]{leeIntroductionSmoothManifolds2012}.} $x(0) \in \mathcal{H}$, it holds that $\lim_{t \to \infty} \| x(t) - x^* \|_2 = 0$.
\end{definition}
	
	\subsection{Existing Results}
	\label{sec:previous_work}
	Systems evolving on manifolds pose particular challenges for stability analysis, especially when the manifold is not contractible.  
	From Sontag’s condition, the following negative result can be obtained.

	\begin{proposition}\cite[Corollary~5.9.13]{sontagMathematicalControlTheory1998}
		\label{prop:non_contractible}
	Systems of the form $\eqref{eq:dyn_system}$ do not admit globally asymptotically stable equilibria.
	\end{proposition}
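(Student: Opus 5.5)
The plan is the classical topological obstruction argument: if an equilibrium were globally asymptotically stable, its basin of attraction, which would be all of $\setH$, would have to be contractible, contradicting the assumption that $\setH$ is non-contractible. I would proceed by contradiction. Suppose $x^*_1 = 0 \in \setH$ is globally asymptotically stable for \eqref{eq:dyn_system} restricted to $\setH$. Since $f$ is smooth and $\setH$ is positively invariant by assumption, $f$ restricted to $\setH$ is tangent to $\setH$. Hence it defines a smooth vector field on the manifold $\setH$ and generates a local flow $\varphi: D \to \setH$, $(t,x) \mapsto \varphi_t(x)$. Global asymptotic stability together with Lyapunov stability gives boundedness of all forward trajectories, so the forward flow is complete and $\varphi$ is continuous on $[0,\infty) \times \setH$.

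The key step is to build a contraction of $\setH$ onto $\{0\}$ from the flow. I would reparametrize time by $s \in [0,1]$ and define
\begin{equation*}
F(x,s) := \begin{cases} \varphi_{s/(1-s)}(x), & s \in [0,1), \\ 0, & s = 1. \end{cases}
\end{equation*}
Then $F(\cdot,0) = \mathrm{id}_{\setH}$ and $F(\cdot,1) \equiv 0$. Continuity on $\setH \times [0,1)$ follows from continuity of the flow. If $F$ is also continuous at $s=1$, then $\setH$ is contractible, which is the desired contradiction.

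The main obstacle is continuity at $s = 1$. Pointwise convergence $\varphi_t(x) \to 0$ is not enough; what is needed is that convergence is locally uniform in $x$. I would obtain this from Lyapunov stability combined with attractivity, following the standard argument behind Sontag's result. Fix $\epsilon>0$ and let $\delta$ be the Lyapunov constant. For $\bar x \in \setH$, there is a time $T$ with $\norm{\varphi_T(\bar x)} < \delta/2$. By continuity of $\varphi_T$, there is a neighbourhood $U$ of $\bar x$ with $\norm{\varphi_T(x)} < \delta$ for all $x \in U$. Lyapunov stability then yields $\norm{\varphi_t(x)} < \epsilon$ for all $t \ge T$ and all $x \in U$. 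This proves continuity of $F$ at $(\bar x, 1)$.

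Alternatively, I would simply invoke \cite[Corollary~5.9.13]{sontagMathematicalControlTheory1998}, which states that the domain of attraction of an asymptotically stable equilibrium of a continuous vector field on a manifold is contractible. The result then follows immediately from non-contractibility of $\setH$.
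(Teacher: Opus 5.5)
Your proposal is correct and follows the same route as the paper. The paper gives no argument of its own; it simply invokes Sontag's Corollary 5.9.13 (the domain of attraction of an asymptotically stable equilibrium is contractible) together with the non-contractibility of $\setH$, and your flow homotopy $F(x,s)=\varphi_{s/(1-s)}(x)$ is the standard argument behind that result, with continuity at $s=1$ correctly obtained from locally uniform convergence (attractivity at one time $T$, continuity of $\varphi_T$, then Lyapunov stability), so you have made the citation self-contained.
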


	Consequently, for systems of the form~\eqref{eq:dyn_system}, no Lyapunov function can establish global asymptotic stability of $x_1^*$.  
	It is therefore of interest to study stability properties that are as strong as possible under these topological constraints.  
	In particular, our analysis is based on stability properties of the remaining equilibria.
	
	\section{Stability on Manifolds}
	\label{sec:methodology}

    This section provides the first result of the paper, which is a dissipativity condition to verify almost global asymptotic stability specifically tailored for SOS programming. This condition will then be relaxed to a SOS program in the following section.

    \begin{theorem}[Asymptotic Stability on $\setH$]
	\label{thm:agas}
If there exists a continuously differentiable function $V: \setH \rightarrow \R$ and a scalar $\epsilon > 0$ such that
\begin{align}
	V(0) &= 0 \label{eq:V_zero}\\
	V(x) &> 0 \quad \text{for all } x \neq 0 \label{eq:V_pos_def} \\
	\langle \nabla V(x), f(x) \rangle &\leq -\epsilon \prod_{i=1}^{k} \norm{x-x_i^*}^2 \quad \text{for all } x \in \setH,\label{eq:Vdot_negative}
\end{align}
then $x^*_1 = 0$ is a locally asymptotically stable equilibrium of system (\ref{eq:dyn_system}). 
\end{theorem}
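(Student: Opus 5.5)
The plan is a standard Lyapunov argument on the manifold $\setH$: Lyapunov stability comes from \eqref{eq:V_zero}--\eqref{eq:V_pos_def} and the non-positivity of $\dot V$, and attractivity comes from the strict decrease of $V$ away from the equilibria, made rigorous on a compact sublevel set.

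\emph{Step 1: decrease along trajectories.} Let $x(t)$ be a solution of \eqref{eq:dyn_system}. Since $x(t)\in\setH$ for all $t\ge0$, the composition $t\mapsto V(x(t))$ is well defined. Its derivative $\frac{d}{dt}V(x(t))=\inner{\nabla V(x)}{f(x)}$ only uses the values of $V$ on $\setH$, because $f$ is tangent to $\setH$ by the invariance requirement in \eqref{eq:dyn_system}. By \eqref{eq:Vdot_negative}, this derivative is at most $-\epsilon\prod_i\norm{x-x_i^*}^2\le 0$.

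\emph{Step 2: a compact, isolated neighbourhood.} The equilibria are finite and isolated, so I choose $r>0$ such that the closed ball $\bar B_r(0)$ contains no $x_j^*$ with $j\ge2$. On the compact set $K_r=\{x\in\setH:\norm{x}=r\}$, assuming it is nonempty (otherwise the connected component of $\setH$ through $0$ lies inside the ball and this step is trivial), define $m=\min_{K_r}V$. We have $m>0$ by \eqref{eq:V_pos_def} and continuity.

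\emph{Step 3: Lyapunov stability.} Given $\varepsilon'\le r$, repeat Step 2 with radius $\varepsilon'$ to obtain a minimum $m'>0$ of $V$ on $\{x\in\setH:\norm{x}=\varepsilon'\}$. By continuity and \eqref{eq:V_zero}, pick $\delta$ with $V<m'$ on $\{x\in\setH:\norm{x}<\delta\}$. A trajectory starting there cannot reach $\norm{x}=\varepsilon'$, because $V$ is nonincreasing along it; by connectedness of the trajectory it therefore stays inside the $\varepsilon'$-ball.

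\emph{Step 4: attractivity.} Take $x(0)$ with $V(x(0))<m$ and $\norm{x(0)}<r$. The trajectory stays in the compact set $\Omega=\{x\in\setH:\norm{x}\le r,\ V(x)\le V(x(0))\}$; compactness holds because $\setH$ is closed as the zero set of a continuous $h$. Since $V(x(t))$ is nonincreasing and bounded below by $0$, it converges to some limit $c\ge 0$. Suppose $c>0$. Then the trajectory stays in $\Omega\cap\{V\ge c\}$, which is compact and contains no equilibrium: $0$ is excluded because $V(0)=0<c$, and the other $x_j^*$ are excluded by the choice of $r$. On this set the continuous function $\epsilon\prod_i\norm{x-x_i^*}^2$ therefore attains a positive minimum $\eta$. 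Then $V(x(t))\le V(x(0))-\eta t\to-\infty$, a contradiction. Hence $c=0$.

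\emph{Step 5: convergence of the state.} It remains to deduce $x(t)\to0$ from $V(x(t))\to0$. If not, some sequence $t_n\to\infty$ has $\norm{x(t_n)}\ge\rho>0$. By compactness of $\Omega$ it has a limit point $\bar x\in\Omega$ with $\bar x\ne0$ and $V(\bar x)=0$, contradicting \eqref{eq:V_pos_def}. Alternatively, LaSalle's invariance principle applied on $\Omega$ gives the same conclusion directly, since the set where $\dot V=0$ inside $\Omega$ is $\{0\}$.

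The main obstacle is the technical point in Step 1: $V$ is defined only on $\setH$, so $\nabla V$ must be interpreted either through a $C^1$ extension or as the intrinsic differential. In both cases the chain rule along the curve $x(t)\subset\setH$ gives the same value, because $f(x)\in\ker\nabla h(x)$. The other delicate point, keeping the other equilibria out of $\Omega$, is handled by the choice of $r$ in Step 2.
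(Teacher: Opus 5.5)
Your proof is correct and takes essentially the same route as the paper: Lyapunov's direct method on a compact, positively invariant neighbourhood of the origin that excludes $x_2^*,\dots,x_k^*$, where the supply-rate term forces $\dot V<0$ away from $0$. Your version is more careful, because intersecting with the closed ball $\bar B_r(0)$ and using $V<\min_{K_r}V$ actually proves the compactness that the paper only asserts for the full sublevel set $\Omega_c=\{x\in\setH \mid V(x)\le c\}$; positive definiteness of $V$ alone does not give that compactness when $\setH$ is unbounded and $V$ is not radially unbounded.
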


\begin{proof}
	Consider the set $\Omega_c := \{x \in \setH \mid V(x) \leq c\}$ with $c>0$. For a sufficiently small $c$, $\Omega_c$ is a compact, positively invariant neighborhood of the origin that excludes the equilibria $\{x_2^*, \dots, x_k^*\}$. Within this set, the derivative of $V$ along a solution is strictly negative for all non-zero states, since
	$$ \dot{V}(x) \leq -\epsilon \prod_{i=1}^{k} \norm{x-x_i^*}^2 < 0 $$
	for any $x \in \Omega_c \setminus \{x_1^*\}$.
	Therefore, by Lyapunov's direct method, the equilibrium $x_1^*$ is locally asymptotically stable.
\end{proof}
Although this property is useful, it only guarantees convergence for initial conditions in a neighborhood of the origin. Specifically in the context of spacecraft control systems, it is often desirable to establish stability properties for a larger set of initial conditions.
Given that global asymptotic stability of $x_1^*$ on $\setH$ cannot hold (Proposition \ref{prop:non_contractible}), we instead make a statement about trajectories converging to other equilibria by adapting \cite[Proposition~11]{monzonLocalGlobalAspects2006} to dynamic systems in manifolds.
%Although global asymptotic stability on $\setH$ is impossible according to Proposition \ref{prop:non_contractible}, the following proposition allows us to obtain \emph{almost global} asymptotic stability properties by adapting \cite[Proposition~11]{monzonLocalGlobalAspects2006} to dynamic systems in manifolds.
	\begin{proposition} 
		\label{prop:unstable_equilibria}
	Under Assumption \ref{ass:positive_real_parts}, the set of initial conditions $x_0 \in \setH$ such that
	\[
	\bigg\{x(0) \in \mathcal{H} \; \bigg|\; \bigvee_{j = 2,\ldots,k} \lim_{t \to \infty} \| x(t) - x^*_j \|_2 = 0 \bigg\}
	\]
    has measure zero on $\setH$.
	\end{proposition}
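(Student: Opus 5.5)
The plan is to show that, for each $j = 2,\ldots,k$, the basin of attraction
\[
W^s(x_j^*) := \{x(0) \in \setH \mid \lim_{t\to\infty}\norm{x(t)-x_j^*} = 0\}
\]
has measure zero on $\setH$. The set in Proposition~\ref{prop:unstable_equilibria} is the finite union of these $k-1$ sets, and a finite union of measure-zero sets has measure zero, so this suffices. Following \cite[Proposition~11]{monzonLocalGlobalAspects2006}, the argument is local near $x_j^*$, combined with the diffeomorphism property of the flow.

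The first step is to work in a chart. Since $\setH$ is a smooth embedded $\ell$-dimensional manifold, choose a chart $\psi$ around $x_j^*$ with $\psi(x_j^*)=0$. Because $f$ is tangent to $\setH$ (trajectories remain in $\setH$), the pushforward $g := D\psi\, f\circ\psi^{-1}$ is a smooth vector field on an open set of $\R^\ell$ with $g(0)=0$. Its Jacobian is conjugate to the restriction of $J_{x_j^*}$ to $T_{x_j^*}\setH = \ker\nabla h(x_j^*)$, which is invariant under $J_{x_j^*}$ at an equilibrium. Assumption~\ref{ass:positive_real_parts} then gives $Dg(0)$ an eigenvalue $\lambda>0$.

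The second step is to invoke the center–stable manifold theorem. This gives a local invariant manifold $W^{cs}_{loc}$ of dimension at most $\ell-1$ (equal to the number of eigenvalues with nonpositive real part). It has the property that every trajectory staying in a small neighborhood $U$ of $0$ for all $t\ge 0$ lies in $W^{cs}_{loc}$. In particular, every point of $W^s(x_j^*)\cap U$ whose forward orbit stays in $U$ lies in $W^{cs}_{loc}$. A $C^1$ submanifold of positive codimension has Lebesgue measure zero in $\R^\ell$, and chart maps preserve measure-zero sets, so $W^{cs}_{loc}$ pulled back to $\setH$ has measure zero on $\setH$.

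The third step globalizes. If $x(0)\in W^s(x_j^*)$, then there is an $N\in\mathbb N$ such that $x(t)\in\psi^{-1}(W^{cs}_{loc})$ for all $t\ge N$. Hence
\[
W^s(x_j^*) \subseteq \bigcup_{N\in\mathbb N} \Phi_{-N}\big(\psi^{-1}(W^{cs}_{loc})\big),
\]
where $\Phi_t$ is the flow of \eqref{eq:dyn_system}. Each $\Phi_{-N}$ is a diffeomorphism onto its image, and diffeomorphisms map measure-zero sets to measure-zero sets (\cite[Prop.~6.5]{leeIntroductionSmoothManifolds2012}). A countable union of measure-zero sets has measure zero, which gives the claim.

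I expect the main obstacle to be a technical point. To use $\Phi_{-N}$ we need the backward flow to be defined, and to be a diffeomorphism, on the relevant sets. If $\setH$ is compact, as for the sphere cases, completeness is automatic. Otherwise I would restrict to the open domain where $\Phi_N$ is defined and use the preimage $\Phi_N^{-1}(\cdot)$ of the local diffeomorphism $\Phi_N$, which still preserves measure zero. A second subtle point is the correct statement of the center–stable manifold theorem when there is a center direction: one needs only the "orbits staying near $0$ lie in $W^{cs}_{loc}$" part, and the possible nonuniqueness of $W^{cs}_{loc}$ is harmless.
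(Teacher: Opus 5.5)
Your proposal is correct and follows the same route as the paper. You restrict the linearization at $x_j^*$ to the tangent space, use Assumption~\ref{ass:positive_real_parts} to get a center--stable manifold of positive codimension (hence of measure zero), globalize as in \cite[Proposition~11]{monzonLocalGlobalAspects2006}, and take the finite union over $j$; you additionally write out the countable-union/flow-preimage globalization that the paper only cites. One small correction: the state manifolds in the paper's examples are $S^2\times\R^3$ and $S^3\times\R^3$, which are not compact, so completeness is not automatic there, and your fallback via $\Phi_N^{-1}$ on the open domain of the flow is the argument that is actually needed.
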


	\begin{proof}
		Since $\setH$ is a smooth manifold, the dynamics in \eqref{eq:dyn_system} linearized around any $x_j^*$, $j = 2, \ldots, k$, are restricted to the subspace $\mathcal{T}_{x_j^*}$ tangential to $\setH$ at $x_j^*$. This subspace can further be divided into stable, unstable, and center subspaces $E^{\mathrm{s}}_{\mathrm{loc}}, E^\mathrm{u}_{\mathrm{loc}}$ and $E^\mathrm{c}_{\mathrm{loc}}$ corresponding (locally) to the stable, unstable, and center manifold, respectively \cite[App. \,B.1]{isidoriNonlinearControlSystems1995}. Given Assumption \ref{ass:positive_real_parts}, the local unstable subspace $E^\mathrm{u}_{\mathrm{loc}}$ (of $\mathcal T_{x_j^*}$) has dimension greater than zero, which implies that the dimension of the center stable manifold, corresponding to $E^\mathrm{s}_{\mathrm{loc}} \oplus E^\mathrm{c}_{\mathrm{loc}}$, is strictly smaller than the dimension of $\mathcal{T}_{x_j^*}$ and therefore has zero Lebesgue measure.
		This local result can be extended to the entire region of attraction of $x_j^*$ on $\setH$. The argument for this extension is analogous to the proof given in \cite[Proposition~11]{monzonLocalGlobalAspects2006}.
		Thus, the set of initial conditions with trajectories converging to $x_j^*$ has measure zero in $\setH$, as does the union over $j = 2,\ldots,k$.
	\end{proof}

Now we can establish almost global asymptotic stability under additional conditions on the Lyapunov function.
The intuition behind this approach is simple, yet useful. The supply rate term on the right-hand side of \eqref{eq:Vdot_negative} ensures the time derivative of $V$ to be strictly negative everywhere on $\setH$ except for the equilibria, allowing for the application of LaSalle's Theorem.

\begin{theorem}[Almost Global Stability]\label{thm:almost_global_stability}
	If there exists a continuously differentiable function $V: \setH \rightarrow \R$ which, in addition to \eqref{eq:V_zero}--\eqref{eq:Vdot_negative}, is radially unbounded\footnote{That is, for $x \in \setH, V(x) \rightarrow \infty$ as $\norm{x} \rightarrow \infty$.}, then the equilibrium point $x^*_1 = 0$ is almost globally asymptotically stable.
\end{theorem}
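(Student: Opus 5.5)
The proof combines three ingredients: Lyapunov stability from Theorem~\ref{thm:agas}, LaSalle's invariance principle applied on sublevel sets of $V$ (bounded thanks to radial unboundedness), and Proposition~\ref{prop:unstable_equilibria} to discard the measure-zero set of initial conditions that converge to the other equilibria.

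\textbf{Step 1: local stability.} Conditions \eqref{eq:V_zero}--\eqref{eq:Vdot_negative} hold, so Theorem~\ref{thm:agas} shows that $x_1^*=0$ is Lyapunov stable, indeed locally asymptotically stable. This is the first requirement in the definition of almost global asymptotic stability.

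\textbf{Step 2: boundedness of trajectories.} Fix an arbitrary $x(0)\in\setH$ and set $c := V(x(0))$. Consider
\[
\Omega_c = \{x\in\setH \mid V(x)\le c\}.
\]
By \eqref{eq:Vdot_negative}, $\dot V\le 0$ along solutions, so $\Omega_c$ is positively invariant. Radial unboundedness makes $\Omega_c$ bounded. Since $\setH$ is the zero set of the continuous function $h$, it is closed, and continuity of $V$ then makes $\Omega_c$ closed. Hence $\Omega_c$ is compact, the solution stays in it, and it exists for all $t\ge 0$.

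\textbf{Step 3: LaSalle on the manifold.} Apply LaSalle's invariance principle on the compact, positively invariant set $\Omega_c\subset\setH$. The argument carries over verbatim: the $\omega$-limit set is nonempty, compact and invariant, and $V$ is constant on it. The right-hand side of \eqref{eq:Vdot_negative} vanishes only at $x\in\{x_1^*,\dots,x_k^*\}$, so
\[
\{x\in\Omega_c \mid \dot V(x)=0\}\subseteq\{x_1^*,\dots,x_k^*\}.
\]
The $\omega$-limit set is connected and contained in this finite set of isolated points, so it is a single equilibrium. Therefore every trajectory converges to some $x_j^*$.

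\textbf{Step 4: excluding the other equilibria.} By Proposition~\ref{prop:unstable_equilibria}, under Assumption~\ref{ass:positive_real_parts}, the set of initial conditions whose trajectories converge to $x_j^*$ for some $j\ge 2$ has measure zero in $\setH$. All remaining initial conditions therefore converge to $x_1^*=0$. Together with Step~1, this gives almost global asymptotic stability.

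\textbf{Main obstacle.} The delicate point is Step~3, namely justifying LaSalle and the connectedness of the $\omega$-limit set for a system constrained to $\setH$. I would handle it by noting that solutions remain in $\setH$ by \eqref{eq:dyn_system}, and that the standard proof uses only compactness of $\Omega_c$ in $\R^n$ and continuity of the flow, both of which hold here. This avoids any intrinsic-manifold machinery. A minor additional check is that $\nabla V$ is meaningful on $\setH$; I would interpret it as the derivative of any $C^1$ extension of $V$, since $f$ is tangent to $\setH$ and the inner product $\langle \nabla V, f\rangle$ does not depend on the choice of extension.
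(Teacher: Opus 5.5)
Your proof is correct and follows essentially the same route as the paper. Radial unboundedness gives compact, positively invariant sublevel sets $\Omega_c$; LaSalle's invariance principle gives convergence to $\{x_1^*,\dots,x_k^*\}$; and Proposition~\ref{prop:unstable_equilibria} discards the measure-zero set of initial conditions converging to $x_j^*$, $j\ge 2$. Your additions (Lyapunov stability via Theorem~\ref{thm:agas}, closedness of $\setH$, and connectedness of the $\omega$-limit set to obtain convergence to a single equilibrium) make explicit details that the paper's proof leaves implicit.
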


\begin{proof}
	% Note: radially unbounded means that every sublevel set is a compact set!
	Consider the set $\Omega_c := \left\{ x \in \setH \mid V(x) \leq c \right\}$ for some $c > 0$. Since $V$ is continuous, $\Omega_c$ is closed for all $c > 0$. Furthermore, $\Omega_c$ is bounded since $V$ is radially unbounded on $\setH$. Therefore, $\Omega_c$ is compact. Given that $\Omega_c$ is positively invariant for all $c>0$, it follows that all solutions starting in $\Omega_c$ remain in $\Omega_c$ for all time. Furthermore, the largest invariant set in $\Omega_c$ where $\inner{\nabla V(x)}{f(x)} = 0$ is given by $\mathcal{E} = \left\{ x^*_1, x^*_2, \ldots, x^*_k \right\}$ since \eqref{eq:Vdot_negative} holds. According to LaSalle's Invariance Principle \cite[Theorem~1]{lasalleExtensionsLiapunovsSecond1960}, all trajectories starting in $\Omega_c$ converge to $\mathcal{E}$. Since this holds for all $c>0$, $\mathcal E$ is attractive on $\setH$. 
	Under Assumption \ref{ass:positive_real_parts}, the set of initial conditions for which the solution converges to $x^*_j$ with $j=2,\ldots,k$ has measure zero on $\setH$ according to Proposition \ref{prop:unstable_equilibria}. Hence, almost all initial conditions in $\setH$ converge to $x^*_1 = 0$.
\end{proof}

Crucially, Theorems \ref{thm:agas} and \ref{thm:almost_global_stability} are formulated to be directly verifiable using SOS programming, as shown in the next section. As a result, we provide a systematic and computationally tractable framework for the analysis of systems including, for example, those arising in spacecraft attitude control.

\section{Sum-Of-Squares Optimization}
\label{sec:sos}
In this section, we briefly introduce the SOS programming method and present our second contribution by relaxing the problem defined in Theorems \ref{thm:agas} and \ref{thm:almost_global_stability} to an SOS program.

A polynomial up to degree $d\in \mathbb{N}_0$ is a sum of monomials
\begin{equation}
    \pi = \sum_{\|\alpha\|_1\ \leq d} c_{\alpha}x^{\alpha}, \quad x^\alpha := x_1^{\alpha_1} \dots x_\ell^{\alpha_\ell},
\end{equation}
where $x = (x_1, \dots, x_\ell)$ is a tuple of $\ell \in \mathbb N$ indeterminate variables, $\left\{ c_\alpha \right\}_\alpha \subset \mathbb R$ are the coefficients of $\pi$ and $\alpha = (\alpha_1, \dots, \alpha_\ell)\in \mathbb N_0^\ell$ are the degrees. The set of polynomials in $x$ with real coefficients up to degree $d$ is denoted by $\mathbb R_d[x]$

\begin{definition}
	\label{def:sos}
	A polynomial $\pi \in \R_{2d}[x]$ is \textit{sum-of-squares} ($\pi \in \Sigma_{2d}[x]$) if and only if there exist $m \in \mathbb{N}$ and polynomials $p_i(x) \in \R_d[x]$ such that $\pi(x) \equiv \sum_{i=1}^m p_i(x)^2$.
\end{definition}

To find such an SOS decomposition, we can use the following proposition \cite{parriloSemidefiniteProgrammingRelaxations2003a}.
\begin{proposition}
	A polynomial $\pi \in \mathbb R_{2d}[x]$ is a \emph{sum-of-squares polynomial} if and only if there exists a matrix $Q\in \mathbb S^+$ and a vector of monomials $z(x)$ of degree $\leq d$ such that
\begin{equation}
    \label{eq:zQz}
    \pi = z(x)^{\top} Q z(x).
\end{equation}
\end{proposition}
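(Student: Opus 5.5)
The statement is the Gram-matrix characterization of SOS polynomials: $\pi \in \Sigma_{2d}[x]$ if and only if $\pi = z(x)^\top Q z(x)$ for some $Q \in \mathbb S^+$ and some vector $z(x)$ of monomials of degree at most $d$. I would prove the two directions separately, taking $z(x)$ to be the full vector of all monomials $x^\alpha$ with $\|\alpha\|_1 \le d$. The sufficiency direction ($\Leftarrow$) uses a factorization of $Q$. The necessity direction ($\Rightarrow$) uses coefficient vectors with respect to the monomial basis. The only point needing care is the matching of degrees, i.e.\ making sure $z$ can be chosen with entries of degree $\le d$.

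For ($\Leftarrow$), I would start from $\pi = z(x)^\top Q z(x)$ with $Q \succeq 0$. The spectral theorem gives $Q = \sum_{i=1}^m \lambda_i u_i u_i^\top$ with $\lambda_i \ge 0$. Alternatively, one can write $Q = L^\top L$ via a Cholesky-type factorization valid for semidefinite matrices. Then
\begin{equation*}
\pi(x) = \sum_{i=1}^m \lambda_i \bigl(u_i^\top z(x)\bigr)^2 = \sum_{i=1}^m p_i(x)^2, \qquad p_i := \sqrt{\lambda_i}\, u_i^\top z(x).
\end{equation*}
Each $p_i$ is a real linear combination of monomials of degree at most $d$, so $p_i \in \R_d[x]$. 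This is exactly Definition~\ref{def:sos}.

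For ($\Rightarrow$), I would start from $\pi = \sum_{i=1}^m p_i^2$ with $p_i \in \R_d[x]$. Each $p_i$ expands in the monomial basis as $p_i(x) = c_i^\top z(x)$ for a coefficient vector $c_i$ of length $\binom{\ell+d}{d}$. Then
\begin{equation*}
\pi(x) = \sum_{i=1}^m z(x)^\top c_i c_i^\top z(x) = z(x)^\top Q z(x), \qquad Q := \sum_{i=1}^m c_i c_i^\top = C^\top C,
\end{equation*}
where $C$ stacks the rows $c_i^\top$. This $Q$ is symmetric and positive semidefinite because $v^\top Q v = \|C v\|_2^2 \ge 0$.

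I expect no real obstacle. Two bookkeeping remarks are worth making. First, $Q$ is not unique, because distinct monomial products can coincide ($x^\alpha x^\beta = x^\gamma x^\delta$); the equivalence only asserts existence, so this does no harm. This non-uniqueness is what later turns the search for $Q$ into an SDP with affine coefficient-matching constraints. Second, the degree restriction needs no separate argument: if $\pi \in \R_{2d}[x]$ is SOS, the leading forms of the $p_i$ cannot cancel, since a nonzero sum of squares of real forms is nonzero. Hence each $p_i$ automatically has degree $\le d$, which is consistent with the statement.
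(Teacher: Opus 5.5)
Your proof is correct and is the standard Gram-matrix argument: factor $Q \succeq 0$ to get the squares for one direction, and stack the coefficient vectors of the $p_i$ with respect to the full monomial vector for the other. The paper gives no proof of its own and defers to Parrilo's work, where this is exactly the argument used; your remark that the degree bound on the $p_i$ follows automatically is a correct but optional addition, since the paper's definition already requires $p_i \in \R_d[x]$.
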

Expanding the right-hand side of \eqref{eq:zQz} and comparing the coefficients with those of $\pi$ leads to a set of linear equality constraints on the entries of $Q$.
Since $\pi$ being \emph{sum-of-squares} is equivalent to $Q$ being positive semidefinite, finding such a (not necessarily unique) $Q$ can be cast into a semidefinite program allowing us to use efficient numerical solvers to find a suitable SOS decomposition (see \cite{parriloSemidefiniteProgrammingRelaxations2003a} for details on the semidefinite program formulation).
Since translating an SOS program into an SDP can be cumbersome, tools such as CaΣoS \cite{Cunis2025acc} simplify the process of converting SOS programs to the according SDP formulation for numerical solution. We use CaΣoS (v1.0.0) in this work.

Under the additional assumption that $f \in \realpoly{d_{f}}$ and $h \in \realpoly{d_{h}}$ are polynomials, we can relax the problem defined in Theorems \ref{thm:agas} and \ref{thm:almost_global_stability} to a convex SOS program with
\begin{subequations}
\label{eq:SOS_program}
\begin{align}
	\text{find} \quad& V(x) \in \sos{d_{V}}, \ p(x) \in \realpoly{d_{p}} \\
	\text{s.t. } \quad
		& V(x) - \epsilon_1 \norm{x}^2 \in \sos{d_{V}}, \label{eq:sos_pos_def}\\
		\text{and } \quad & -\inner{\nabla V(x)}{f(x)} + p(x) h(x) \notag \\
		& \qquad - \epsilon_2 \prod_{i=1}^{k} \norm{x - x_i^*}^2 \in \sos{2d},
		\label{eq:sos_vdot_neg}
\end{align}
\end{subequations}
where $V(x)$ and $p(x)$ are polynomial decision variables and $\epsilon_i>0$ are small positive constants. The degrees $d_{v}, d_{p} \in \mathbb{N}$ are chosen \textit{a priori} such that $2d \geq \max{} \{ d_{f}+d_{V}-1, d_{p}+d_{h}, 2k \}$.
Constraint \eqref{eq:sos_vdot_neg} uses the technique reminiscent of the well-known S-procedure described in \cite{papachristodoulouTutorialSumSquares2005} to ensure that the dissipativity condition \eqref{eq:Vdot_negative} holds on $\setH$.

\section{Examples}
\label{sec:examples}
In this section, we demonstrate the utility of our theoretical results and the derived SOS relaxation by applying our framework to two separate spacecraft attitude stabilization problems. In both cases, we compute Lyapunov functions certifying almost global asymptotic stability using sum-of-squares programming.
\begin{itemize}
    \item In Example 1, we use a global polynomial fit for nonlinear aerodynamics to find a suitable Lyapunov function to certify aerodynamic stability of a feathered satellite design under rate damping.
    \item In Example 2, we demonstrate the method's feasibility for higher-dimensional systems (7 states), applying it to attitude dynamics using quaternions which are essential in the field.
\end{itemize}

\subsubsection*{Coordinate Frames and Notation}

In the following examples, `$\mathrm{B}$' denotes the body frame aligned with the satellite's principal axes of inertia (see Fig.~\ref{fig:satellite}), and `$\mathrm{I}$' denotes an inertial frame. In the second example, the orbital frame is denoted by `$\mathrm{O}$', with its $\hat{z}_{\mathrm{O}}$-axis pointing to the Earth's center, its $\hat{x}_{\mathrm{O}}$-axis pointing in the direction of orbital velocity and its $\hat{y}_{\mathrm{O}}$-axis completing the right-handed coordinate system.
The normalized vector $\hat{v}$ is the unit vector pointing in the direction of a vector $v$. The skew-symmetric matrix associated with the cross product of a vector $a \in \R^3$ is denoted by $\skewop{a} \in \R^{3 \times 3}$.
For attitude parametrizations, $(\cdot)_{\mathrm{BA}}$ denotes the attitude of frame $\mathrm{B}$ with respect to frame $\mathrm{A}$. Furthermore, $\omega_{\mathrm{BA}}^{\mathrm{C}}$ denotes the angular velocity of frame $\mathrm{B}$ with respect to frame $\mathrm{A}$ expressed in frame $\mathrm{C}$.
\subsection{Aerostability of Satellites}

	%%%%%%%%%%%%%%%%%%%%%%%%%%%%%%%%%%%%%%%%%%%%%%%%%%%%%%%%%%%%%%%%%%%%%%
	% --------------------------------------------------------------------
	%%%%%%%%%%%%%%%%%%%%%%%%%%%%%%%%%%%%%%%%%%%%%%%%%%%%%%%%%%%%%%%%%%%%%%
	We examine the \textit{aerostability} of the SOAR satellite \cite{crispSOARSatelliteOrbital}, a 3U cubesat with feathered geometry operating in VLEO. Its parameters are summarized in Table~\ref{tab:satellite_params}. At this altitude, aerodynamic torques are the dominant disturbance acting on the satellite attitude dynamics. The idea behind this approach is to utilize aerodynamic torques to passively align the satellite's body frame with the incoming flow direction under active rate damping. Using our methodology, we provide a formal verification of the aerostability property in the almost-global sense.
	
	% table of satellite parameters (see figures/satellite_parameters_table.tex)
	\begin{table}[ht]
\centering
\resizebox{\columnwidth}{!}{%
\begin{tabular}{lll}
Parameter & Symbol [unit] & Value \\ 
\hline
Inertia  & $\Theta$ [kg\,m$^2$] & $\mathrm{diag}(0.0288, 0.0392, 0.0392)$ \\
%Principal moment (x-axis) & $\Theta_x$ [kg\,m$^2$] & $0.0288$ \\ 
%Principal moment (y-axis) & $\Theta_y$ [kg\,m$^2$] & $0.0392$ \\ 
%Principal moment (z-axis) & $\Theta_z$ [kg\,m$^2$] & $0.0392$ \\ 
Wing panel area (one side) & $A$ [m$^2$] & $0.0342$ \\ 
Wing CoP x-position & $x_{\mathrm{cp}}$ [m] & $-0.1310$ \\ 
Wing CoP distance to $x_{\text{B}}$ axis & $r_{\perp}$ [m] & $0.3350$ \\ 
\end{tabular}%
}

\caption{Satellite parameters.}
\label{tab:satellite_params}
\end{table}

	\begin{figure}[ht]
		\centering
		\includegraphics[width=0.55\columnwidth]{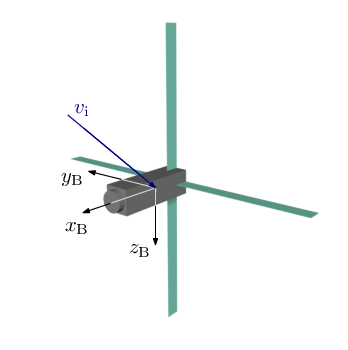}
		\caption{Satellite geometry and corresponding body frame.}
		\label{fig:satellite}
	\end{figure}
	
	\subsubsection{Attitude Kinematics and Dynamics}
	Consider the two-axis attitude kinematics and dynamics \cite[Ch.~5.2.2]{fichterPrinciplesSpacecraftControl2023} of a satellite in VLEO subject to aerodynamic torques given by
    \begin{subequations}
	\begin{align}
		\wdot &= \w\times \om  \label{eq:pointing_kinematics_example} \\
		\dotom &= -\Theta^{-1} \left[\om \times \Theta \om +  \tau_{\mathrm{aero}}^{\mathrm{(B)}} + \tau_{\mathrm{c}}^{\mathrm{(B)}}\right] \label{eq:pointing_dynamics_example}
	\end{align}
    \end{subequations}
	where $\w$ is the direction vector defined as the negative normalized incoming wind velocity $-\hat{v}_{\mathrm{i}}$, $\om$ is the rotational velocity and $\Theta \in \R^{3\times 3}$ is the inertia matrix expressed in the body frame $\mathrm{B}$. Furthermore, $\tau_{\mathrm{aero}}^{\mathrm{(B)}}$ and $\tau_{\mathrm{c}}^{\mathrm{(B)}}$ are the aerodynamic and control torques expressed in the body frame $\mathrm{B}$. Since $\w$ is a unit direction vector, the system dynamics evolve on the manifold $\setH = \left\{ (\w, \om) \in \R^6 \mid \|\w \|_2 = 1 \right\}$. 

	\subsubsection{Aerodynamic Torque}
	The aerodynamic torque is computed as the sum of the individual panel contributions, expressed directly as
	\begin{equation}
		\tau_{\mathrm{aero}}^{\mathrm{(B)}} = \sum_{j=1}^{p} r_j \times \left( \frac{\rho}{2} \norm{v_{\mathrm{i}}}^2 A_j \left[ H_1 \hat{n}_j + H_2 \hat{v}_{\mathrm{i}} \right] \right)
		\label{eq:aero_torque_combined}
	\end{equation}
	where $r_j$ is the position vector from the satellite’s center of mass to the center of pressure of panel $j$. 
	The aerodynamic force on each panel is modeled as a two-sided flat plate according to Sentman’s method~\cite{sentman_free_1961}, with $\rho$ denoting the atmospheric density, $A_j$ the panel area, $\hat{n}_j$ the panel normal vector. The expressions $H_1$ and $H_2$ are scalar terms containing all nonpolynomial dependencies. Their full expressions are given in the appendix.
	All environmental parameters required by the model are chosen as reasonable constant parameters at an altitude of $300 \mathrm{ km}$ (see \Cref{tab:envparams}). 
	% Input table with environmental parameters
	\begin{table}[ht]
		\centering
		% Auto-generated by export_environment_to_tex.m on 23-Oct-2025 10:38:03
\begin{tabular}{lll}
Parameter & Symbol [unit] & Value \\ 
\hline
Altitude & $h$ [km] & $300$ \\ 
Density & $\rho$ [kg/m$^3$] & $1.88\,\times\,10^{-11}$ \\ 
Orbital speed & $V_{\text{i}}$ [m/s] & $7725.84$ \\ 
Energy accommodation coefficient & $\alpha_{\text{E}}$ [-] & $0.95$ \\ 
Thermal speed ratio & $s_{\text{i}}$ [-] & $7.86$ \\ 
\end{tabular}

		\caption{Environment parameters at $h=300\,\mathrm{km}$.}
		\label{tab:envparams}
	\end{table}

	Given those parameters, $H_1$ and $H_2$ can be expressed as functions of $\cos \delta = -\inner{\hat{v}_{\mathrm{i}}}{\hat{n}}$ which is the angle between the incoming wind direction and the panel normal vector.
	To obtain polynomial dynamics, we coarsely approximate $H_1$ and $H_2$ using a least-squares polynomial fit of the degrees 2 and 4, respectively, over the interval $\cos \delta \in [-1,1]$ as shown in \Cref{fig:poly_fit}. 

	% Polyfit figure as .tex file with figurewidth
	\begin{figure}[ht]
		\centering
		\resizebox{\figurewidth}{!}{\input{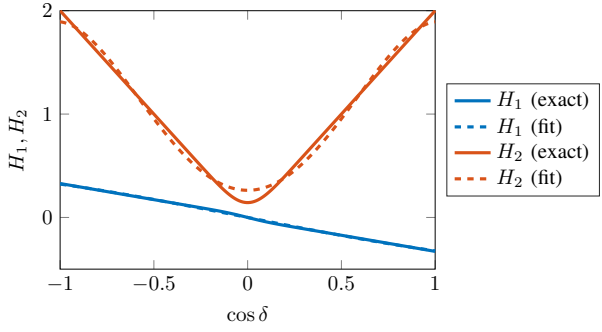}}
		\caption{Polynomial fit of $H_1$ and $H_2$ over the interval $\cos \delta \in [-1,1]$.}
		\label{fig:poly_fit}
	\end{figure}

	\subsubsection{Closed-Loop System Dynamics}
	Given that the rotational velocity compared to the orbital velocity is insignificant, we assume that the aerodynamic torque only depends on the satellite orientation $\w$. Consequently, active damping is required to ensure asymptotic stability of the equilibrium point $\bar{x}_1^*$. The control torque is then simply defined as
	\begin{equation}
		\tau_{\mathrm{c}}^{\mathrm{(B)}} = -k_\mathrm{D} \Theta \om
	\end{equation}
	where $k_\mathrm{D} > 0$ is a constant gain.
	To obtain the final closed-loop system dynamics, we define the original state vector as
		$\bar{x} = (\w, \om) \in \R^6$.
	Due to the satellite's symmetry, there are two isolated equilibrium points, namely, 
	\begin{equation}
		\bar{x}^*_1 = \begin{bmatrix}
			1 \\ \mathbf{0}_5
		\end{bmatrix}, \quad
		\bar{x}^*_2 = \begin{bmatrix}
			-1 \\ \mathbf{0}_5
		\end{bmatrix}.
	\end{equation}

	To obtain the final closed-loop dynamics in the form of \eqref{eq:dyn_system}, we define a state transformation
	\begin{equation}
		x = S(\bar{x} - \bar{x}^*_1) \in \R^6. \label{eq:shift_and_scale}
	\end{equation}
	by shifting the origin into the first equilibrium point and scaling the state variables by a diagonal matrix $S = \mathrm{diag}(1,1,1,20,20,20)$. The scaling is heuristically determined to improve the numerical conditioning of the SOS program by keeping the polynomial coefficients of the transformed system at comparable magnitudes.
	The transformed system dynamics can be found in the appendix. The Jacobian $H^{\top}\nabla f(x^*_2)H$ has two eigenvalues with positive real part, satisfying Assumption \ref{ass:positive_real_parts}.

	\subsubsection{Stability Analysis}
	For $k_{\mathrm{D}}=0.016$ and $\epsilon_1 = \epsilon_2 = 1\times 10^{-5}$, we solve the SOS program \eqref{eq:SOS_program} using CaΣoS \cite{Cunis2025acc} with MOSEK \cite{mosek} as the underlying solver. Using polynomial decision variables V(x) and p(x) of degrees 4 and 6, respectively, we obtain a suitable quartic Lyapunov function (omitted for brevity). Its evolution and the corresponding system trajectories are illustrated for an exemplary solution in \Cref{fig:V_along_trajectory}
	
	% Figure of V along trajectory
	\begin{figure}[ht]
		\centering
		\resizebox{\figurewidth}{!}{\input{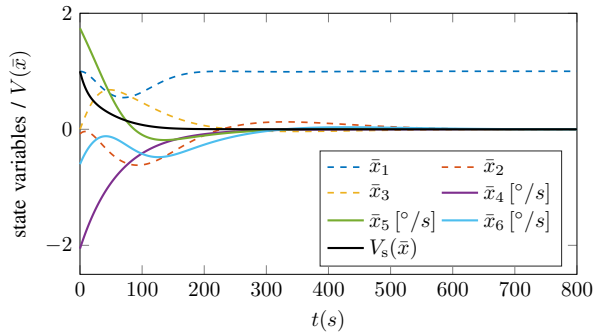}}
		\caption{Example 1: Value of the scaled Lyapunov function ${V_{\mathrm{s}}(\bar{x})}$ and the original coordinates along an exemplary solution trajectory. Body rates are shown in $\mathrm{deg}/\mathrm{s}$ for better readability.}
		\label{fig:V_along_trajectory}
	\end{figure}

	%%%%%%%%%%%%%%%%%%%%%%%%%%%%%%%%%%%%%%%%%%%%%%%%%%%%%%%%%%%%%%%%%%%%%%
	% --------------------------------------------------------------------
	%%%%%%%%%%%%%%%%%%%%%%%%%%%%%%%%%%%%%%%%%%%%%%%%%%%%%%%%%%%%%%%%%%%%%%
	\subsection{Quaternion-Based Stabilization of Satellites subject to Gravity Gradient Torque}
	% --------------------------------------------------------------------
	%%%%%%%%%%%%%%%%%%%%%%%%%%%%%%%%%%%%%%%%%%%%%%%%%%%%%%%%%%%%%%%%%%%%%%
	At higher altitudes, the influence of aerodynamic torque becomes negligible and gravity gradient torque becomes the dominant environmental disturbance. In this example, we consider the three-axis attitude stabilization of the same spacecraft in circular orbit subject to gravity gradient torque using quaternion feedback control at an altitude of 500 km.
	
	\subsubsection{Kinematics and Dynamics}
	The attitude kinematics and dynamics of a rigid body spacecraft in a circular orbit \cite{chaturvediAlmostGlobalAttitude2006} are given by
    \begin{subequations}
	\begin{align}
		\qBOdot &= \frac{1}{2} \begin{bmatrix}
			0 & -(\omBO)^{\top} \\
			\omBO & -\skewop{\omBO} \\
		\end{bmatrix} \qBO \label{eq:ex2_quat_kinematics} \\
		\omBOdot &= \Theta^{-1} 
		\left[
		-\left(\omBO-\omega_0\,\yBO\right) \times \Theta
		\left(\omBO-\omega_0\,\yBO\right) \right. \notag \\
		& \quad \left. -\omega_0 \Theta \omBO \times \yBO + \tau_{\mathrm{c}}^{\mathrm{(B)}} + \tau_{\mathrm{gg}}^{\mathrm{(B)}} \right]\label{eq:ex2_dynamics}
	\end{align}
    \end{subequations}
	where $\qBO=\begin{bmatrix}
		q_0 & q_v^{\top}
	\end{bmatrix}^{\top} \in \R^4$ is the unit quaternion representing the attitude of the body frame $\mathrm{B}$ with respect to the orbital frame $\mathrm{O}$, and $\omega_0$ is the scalar orbital rate. 
    The system dynamics evolve on the manifold $\setH = \left\{ (\qBO , \omBO) \in \R^4 \times \R^3 \mid \|\qBO \|_2 = 1 \right\}$. 
    We define the unit basis vectors of the orbital frame expressed in the body frame as $\xBO$, $\yBO$, and $\zBO$; those form the columns of the direction cosine matrix
	\begin{equation*}
		T_{\mathrm{OB}}=
        (2q_0^2-1) {I}_{3\times 3} + 2(q_vq_v^{\top} - q_0[q_v \times]).
        %2\begin{bmatrix}
		%	q_0^2 + q_1^2 - \frac{1}{2} & q_1 q_2 + q_0 q_3 & q_1 q_3 - q_0 q_2 \\
		%	q_1 q_2 - q_0 q_3 & q_0^2 + q_2^2 - \frac{1}{2} & q_2 q_3 + q_0 q_1 \\
		%	q_1 q_3 + q_0 q_2 & q_2 q_3 - q_0 q_1 & q_0^2 + q_3^2 - \frac{1}{2}
	    %\end{bmatrix}
	\end{equation*}

	Furthermore, the gravity gradient torque in circular orbits is given by \cite[Ch.\,1.2.2]{fichterPrinciplesSpacecraftControl2023}
	\begin{equation*}
		\tau_{\mathrm{gg}}^{\mathrm{(B)}} = 3 \omega_0^2 \left( \zBO \times \Theta \zBO \right).
	\end{equation*}

	 \subsubsection{Control Design}
	The goal of this example is to stabilize the spacecraft attitude by aligning the body frame $\mathrm{B}$ with the orbital frame $\mathrm{O}$. To achieve this, we employ a proportional-derivative (PD) control law based on \cite{wieQuaternionFeedbackRegulator1989} defined as
	\begin{equation}
		\tau_{\mathrm{c}}^{\mathrm{(B)}} = -k_\mathrm{p}\Theta q_v - k_\mathrm{d}\Theta \omBO
	\end{equation}
	where $k_\mathrm{p}, k_\mathrm{d} > 0$ are constant scalar gains.
	\subsubsection{Closed-Loop System Dynamics}
	Similar to the first example, we define the state vector as $\bar{x} = (\qBO, \omBO) \in \R^7$.
	The resulting closed-loop system exhibits the two isolated equilibrium points 
	\begin{equation}
	\bar{x}^*_1 = \begin{bmatrix}
			1 \\ \mathbf{0}_{6 \times 1}
			\end{bmatrix}, \quad
		\bar{x}^*_2 = \begin{bmatrix}
			-1 \\ \mathbf{0}_{6 \times 1}
			\end{bmatrix}.
	\end{equation}
	Again, we shift and scale the system dynamics with (\ref{eq:shift_and_scale}) using the scaling matrix $S$ (see \Cref{tab:params_example2}). Computing the eigenvalues of $H^{\top}\nabla f(x^*_2)H$ yields two positive real eigenvalues satisfying Assumption \ref{ass:positive_real_parts}.
	% Input table with environmental parameters
	\begin{table}[ht]
		\centering
		\begin{tabular}{lll}
			Parameter & Symbol [unit] & Value \\ 
			\hline
			Inertia Matrix & $\Theta$ [kg\,m$^2$] & see Table~\ref{tab:satellite_params}. \\ 
			Proportional gain & $k_\mathrm{p}$ & $0.0064$ \\
			Derivative gain & $k_\mathrm{d}$ & $0.08$ \\
			\hline
			Altitude & $h$ [km] & $500$ \\
			Orbital rate & $\omega_0$ [rad/s] & $0.0011$ \\
			\hline
			Scaling matrix & $S$ & $\mathrm{diag}(1,1,1,1,15,15,15)$ \\
			Small constants & $\epsilon_1, \epsilon_2$ & $1\times 10^{-5}$ \\
			\hline
		\end{tabular}
		\caption{Parameters for Example 2.}
		\label{tab:params_example2}
	\end{table}

	\subsubsection{Stability Analysis}
	The parameters used to apply our methodology are summarized in \Cref{tab:params_example2}. 
	Again, we solve the  SOS program defined by ($\ref{eq:SOS_program}$) with 
	CaΣoS and the underlying solver MOSEK.
	Using polynomial decision variables $V(x)$ of degree 4 and $p(x)$ of degree 6, we find the Lyapunov function $V(x)$ (omitted for brevity) certifying almost global asymptotic stability of the equilibrium point $x^*_1$.
    Its value and the course of the original coordinates are shown for an exemplary solution trajectory in \Cref{fig:V_along_trajectory_example2}.
	
	% Figure of V along trajectory (example 2)
	\begin{figure}[ht]
		\centering
		\resizebox{\figurewidth}{!}{\input{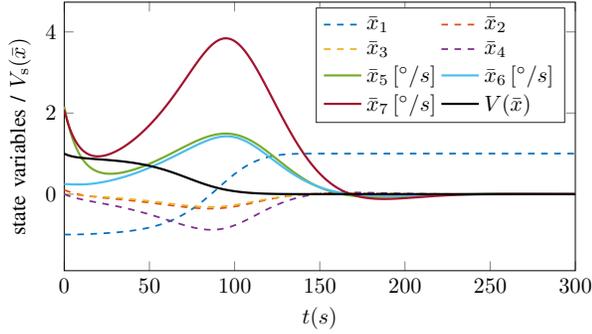}}
		\caption{Example 2: Value of the scaled Lyapunov function ${V_{\mathrm{s}}(\bar{x})}$ in original coordinates along a solution trajectory. Body rates are shown in $\mathrm{deg}/\mathrm{s}$ for better readability.}
		\label{fig:V_along_trajectory_example2}
	\end{figure}

	This result shows that our methodology can be applied to systems with seven state variables, making it viable for a wide range of spacecraft attitude control problems.
	
	\section{Conclusion}
	\label{sec:conclusion}
	We present a framework to systematically verify almost global asymptotic stability of nonlinear control systems evolving on non-contractible manifolds using LaSalle's invariance principle and SOS optimization. This approach directly addresses the analytical challenges of constrained attitude representations, such as unit quaternions, where global asymptotic stability is topologically infeasible. We establish these strong stability guarantees by constructing a Lyapunov function whose time derivative along solutions is strictly negative everywhere on the manifold except for the isolated equilibrium points.
	
	The methodology's practical utility is demonstrated in two relevant spacecraft attitude control problems. First, we formally verify the passive aerostability of a cubesat in VLEO under active rate damping, achieving, to our knowledge, the first quasi-global certification using Lyapunov's methods for this scenario. Second, we verify the almost global stability of a three-axis attitude acquisition system using quaternion feedback control subject to gravity gradient torques. In both cases, we successfully find suitable Lyapunov functions using SOS optimization, highlighting the framework's capacity to handle complex polynomial systems.

    While the current work addresses nominal system dynamics, the SOS-based methodology provides a clear path for extension. A critical next step is to extend this framework to address robust stability, providing formal guarantees for systems subject to the parametric or model uncertainty common in environments like VLEO.
    To facilitate this and encourage further development, the code used to generate the results in this paper is available as an open-source repository \cite{geyer-Supplementary25}.
	
	%%%%%%%%%%%%%%%%%%%%%%%%%%%%%%%%%%%%%%%%%%%%%%%%%%%%%%%%%%%%%%%%%%%%%%

\addtolength{\textheight}{-6cm}   % This command serves to balance the column lengths
                                  % on the last page of the document manually. It shortens
                                  % the textheight of the last page by a suitable amount.
                                  % This command does not take effect until the next page
                                  % so it should come on the page before the last. Make
                                  % sure that you do not shorten the textheight too much.

%%%%%%%%%%%%%%%%%%%%%%%%%%%%%%%%%%%%%%%%%%%%%%%%%%%%%%%%%%%%%%%%%%%%%%%%%%%%%%%%

%%%%%%%%%%%%%%%%%%%%%%%%%%%%%%%%%%%%%%%%%%%%%%%%%%%%%%%%%%%%%%%%%%%%%%%%%%%%%%%%

%%%%%%%%%%%%%%%%%%%%%%%%%%%%%%%%%%%%%%%%%%%%%%%%%%%%%%%%%%%%%%%%%%%%%%%%%%%%%%%%
\section*{Appendix}

The scalar functions $H_1$ and $H_2$ used in (\ref{eq:aero_torque_combined}) are given by \cite{sentman_free_1961}
\begin{align} 
	H_{1}(\cosd) &= -\frac{1}{2 s_{\mathrm{i}}^2} \exp(-s_{\mathrm{i}}^{2} \cos^2 \delta) T^{-}(\cosd) \notag \\
	&\quad- \operatorname{erf}(s_{\mathrm{i}} \cosd) \left( \frac{1}{s_{\mathrm{i}}^{2}} + \frac{\sqrt{\pi} \cosd}{2s_{\mathrm{i}}} T^{-}(\cosd) \right) \notag \\
	&\quad- \frac{\sqrt{\pi} \cosd}{2s_{\mathrm{i}}} T^{+}(\cosd)
\\
		H_{2}(\cosd) &= \frac{2}{\sqrt{\pi}s_{\mathrm{i}}} \exp(-s_{\mathrm{i}}^{2} \cosd^2) + 2 \cosd \operatorname{erf}(s_{\mathrm{i}} \cosd)
	\end{align}
	with
	\begin{equation}
		T^{\pm}(\cosd) = T_{\mathrm{rat}}(\cosd) \pm T_{\mathrm{rat}}(-\cosd)
	\end{equation}
	and the temperature ratio
	\begin{align}
		T_\mathrm{rat}(C) &= \alpha_{\mathrm{E}} \frac{2k_{\mathrm{B}}T_{\mathrm{w}}}{m_{\mathrm{T}}\norm{v_{\mathrm{i}}}^{2}} s_{\mathrm{i}}^{2} + (1-\alpha_{\mathrm{E}})\left[ 1+ \frac{s_{\mathrm{i}}^{2}}{2} + \right.\notag\\
		& \left. \frac{1}{4} \frac{s_{\mathrm{i}}C \sqrt{\pi} \operatorname{erfc}(-s_{\mathrm{i}} C)}{\exp(-s_{\mathrm{i}}^{2}C^2) + s_{\mathrm{i}} C \sqrt{\pi} \operatorname{erfc}(-s_{\mathrm{i}} C)} \right]
	\end{align}
	as derived in \cite{tuttasGeneralizedTreatmentEnergy2025}. Parameters used in this model are given in \Cref{tab:envparams}.

The transformed closed-loop system dynamics $\dot{x} = f(x)$ for example 1 are given by the polynomials
	\begin{align*}
		f_1 &= -0.05 x_3 x_5 + 0.05 x_2 x_6 \\
		f_2 &= -0.05 x_6 + 0.05 x_3 x_4 - 0.05 x_1 x_6 \\
		f_3 &= 0.05 x_5 - 0.05 x_2 x_4 + 0.05 x_1 x_5 \\
		f_4 &= -0.016 x_4 \\
		f_5 &= -0.00226 x_3 - 0.016 x_5 + 0.0133 x_4 x_6 - 0.00807 x_2^2 x_3 \notag \\
		    & \quad  - 0.00799 x_3^3 + 0.00389 x_2^4 x_3 + 0.00389 x_3^5 \\
		f_6 &= 0.00226 x_2 - 0.016 x_6 - 0.0133 x_4 x_5 + 0.00799 x_2^3\notag \\
		    & \quad  + 0.00807 x_2 x_3^2 - 0.00389 x_2^5 - 0.00389 x_2 x_3^4.
	\end{align*}
with the manifold constraint
\begin{equation*}
	h(x) = 2x_1 + x_1^2 + x_2^2 + x_3^2.
\end{equation*}

%%%%%%%%%%%%%%%%%%%%%%%%%%%%%%%%%%%%%%%%%%%%%%%%%%%%%%%%%%%%%%%%%%%%%%%%%%%%%%%%

\IEEEtriggeratref{13}
\bibliographystyle{IEEEtran}
\bibliography{references}

\end{document}